\newtheorem{theorem}{Theorem}
\newtheorem{lem}{Lemma}
\newtheorem{proposition}{Proposition}
\newtheorem{corollary}{Corollary}
\newtheorem{remark}{Remark}
\newtheorem{question}{Question}
\newtheorem{problem}{Problem}
\def \N{\mathbb N}
\def \Zn{\mathbb{Z^-}}
\def \Rp{\mathbb R^+}
\def \P{\mathbb P}
\def\T{{\mathrm{T}}}
\let\phi\varphi
\newcommand{\Diag}{{\rm Diag}\mathbb{P}\hspace{0.05cm}}
\begin{document}
	
\title    [On distribution of subsequences of primes having prime indices]
          {On distribution of subsequences of primes having prime indices with respect to the $(R)$-denseness and convergence exponent}

\author[P. Miska]{Piotr Miska}


%

%
\author[J. T. T\'oth]{J\'anos T. T\'oth}

%
%

%
\author[B. Żmija]{Błażej Żmija}
%
%

\begin{abstract} Denote by $\N$ and $\P$  the set of all positive integers and prime numbers, respectively. Let $\P=\{p_1<p_2<\dots <p_n<\dots\}$, where $p_n$ is the $n$-th prime number. For $k\in\N$ we recursively define subsequences $(p^{(k)}_n)_{n=1}^{+\infty}$ of the sequence $(p_n)_{n=1}^{+\infty}$ in the following way:
let $p_n^{(1)}=p_n$ and $p_n^{(k+1)}=p_{p_n^{(k)}}$. In this paper we study and describe some interesting properties of the sets $\P_k=\{p_1^{(k)}<p_2^{(k)}<\dots<p_n^{(k)}<\dots\}$, $\P_n^\T=\{p_n^{(1)}<p_n^{(2)}<\dots<p_n^{(k)}<\dots\}$ and $\text{Diag}\P=\{p^{(1)}_1<p^{(2)}_2<\dots <p^{(k)}_k<\dots\}$ and their elements, for $k,n\in\N$. Especially, we check whether these sets have dense sets of ratios in $\Rp$. Moreover, we compute their exponents of convergence and asymptotics of their counting functions.
\end{abstract}

\keywords{ratio sets, prime numbers, denseness, convergence exponent}
\subjclass[2020]{11A41, 11B05}
\thanks{The first author is supported by the grant of the Polish National Science Centre No. UMO-2018/29/N/ST1/00470 and the scholarship START 2019 of the Foundation for Polish Science. The second author is supported by The Slovak Research and Development Agency under the grant VEGA No. 1/0776/21. The third author is supported by Czech Science Foundation, grant 21-00420M. During the preparation of the work, the third author was a scholarship holder of the Kartezjusz program funded by the Polish National Centre for Research and Development, grant No. POWR.03.02.00-00-I001/16-00.}

\parindent=0pt
\maketitle

\section{Introduction}

It is a famous result that the set of quotients of prime numbers is dense in the set of positive real numbers. It is a motivation to wide study of denseness properties of subsets of positive integers on real half-line, see e.g. \cite{BT, S1, S2, TZ}. One can meet it as an exercise on course of number theory, see \cite[Problem 218]{DKM}, \cite[Ex. 4.19]{FR}, \cite[Ex. 7, p. 107]{P}, \cite[Thm. 4]{Rib} and also in several articles, e.g. \cite[Cor. 4]{GS-JPS}, \cite[Thm. 4]{HS}, \cite[Cor. 2]{STA} (according to the last reference, the result was known to Sierpiński, who credits it to Schinzel \cite{N}). The authors of \cite{GS-JPS} generalized this result to the subsets of prime numbers in given arithmetic progressions.

Motivated by the article \cite{HR} on ``light'' subsets of positive integers (i.e. subsets with slowly growing counting functions) we focus on the family of subsets $\P_k=\{p^{(k)}_1<p^{(k)}_2<p^{(k)}_3<...\}$, $k\in\N$, of prime numbers such that every next set contains these elements of the preceding one indexed by prime numbers. As a consequence, every next set is a zero asymptotic density subset of the preceding one. Although the sets $\P_k$ are ``lighter and lighter'' as $k$ increases, we will show that all of them have dense quotient sets in the set of positive real numbers and have convergence exponent equal to $1$. Let us notice that the set $\P_2$ was already studied in \cite{BKOeS, BB}. The authors of the mentioned papers obtained results on estimations of elements and the counting function of $\P_2$, extreme values of gaps between consecutive elements of $\P_2$, appearance of these elements in arithmetic progressions and the sum of their reciprocals. 

We will also study the sets $\P^T_n=\{p^{(k)}_n: k\in\N\}$, $n\in\N$, and $\text{Diag}\P=\{p^{(k)}_k:k\in\N\}$. We shall prove that, in the opposition to the sets $\P_k$, their quotient sets are not dense in $\Rp$. Additionally, we will show that all the sets $\P_n^T$, $n\in\N$, and $\text{Diag}\P$ have convergence exponent equal to $0$.

Finally, we will give bounds for consecutive elements of the sets $\P_n^T$, $n\in\N$, and $\text{Diag}\P$. Basing on these bounds, we shall prove that the counting functions of the sets $\P_n^T$, $n\in\N$, and $\text{Diag}\P$ are asymptotically equal.

\section{Definitions and notations}

We introduce the basic definitions and conventions that will be used throughout the paper.

Denote by $\N$, $\N_0$, $\Zn$, $\P$ and $\Rp$ the set of all positive integers, non-negative integers, non-positive integers, prime numbers and positive real numbers, respectively. For given $x\ge 1$, define the counting function of $A\subset \N$ as
$A(x)=\#\{a\leq x: a\in A \}$, and for $B\subset\Rp$ we denote $B^d$ the set of all accumulation points of the set $B$ (with respect to the natural topology on $\Rp$).
\\Denote by $R(A)=\{\frac ab: a, b\in A\}$ the ratio set or quotient set of a given subset $A$ of $\N$. We say that the set $A$ is $(R)$-dense if $R(A)$ is (topologically) dense in the set $\Rp$, i.e. $R^d (A)=\Rp$. If $A$ is not $(R)$-dense, then we will say that this is a $Q$-sparse set. Let us note that the concept of $(R)$-density was defined and first studied in the papers \cite{S1} and \cite{S2}.
\\In the following instead of $\lim_{n\to +\infty}\frac{a_n}{b_n}=1$ we will write $a_n\sim b_n$ or $a_n\sim b_n$ as $n\to +\infty$, for the sequences $(a_n)$, $(b_n)$ of positive real numbers. We also use the ''small oh'', ''big Oh'', ``small omega'' and ``theta'' notations in their standard meaning. We will use the following definition of the ``big Omega'' notation: $f(x)=\Omega(g(x))$ if and only if $g(x)=O(f(x))$. If some property holds for all values greater than some constant we write that this property holds for $x\gg 0$.

The set $\P=\{p_1<p_2<\dots <p_n<\dots\}$, where $p_n$ is the $n$-th prime number fulfills the above well known properties (see \cite{HW} and \cite{NAT}):
\begin{equation}\label{e1}
p_n\sim n\log n, \quad
 p_{n+1}\sim p_n ,\quad \log p_n\sim\log n , 
\end{equation}
and
\begin{equation}\label{e2}
\P(x)=\pi(x)\sim\frac{x}{\log x} .
\end{equation}

Let us denote $p^{(0)}_n=n$ and define recursively $p^{(k)}_n$ by
$$p_n^{(k+1)}=p_{p_n^{(k)}}\,\quad\text{for }k\in\N_0.$$
Note that for every $k\in\N_0$ we have
$$p^{(k+1)}_n=p_{p^{(k)}_n}=p^{(k)}_{p_n}.$$
Further denote
$$\P_k=\{p_1^{(k)}<p_2^{(k)}<\dots<p_n^{(k)}<\dots\}\quad\text{for }\ k\in\N_0,\ \textrm{and } $$
$$\P_n^\T=\{p_n^{(1)}<p_n^{(2)}<\dots<p_n^{(k)}<\dots\}\quad\text{for }\ n\in\N\,.$$
Therefore, $\P_0=\N$, $\P_1=\P$ and for $k\in\N_0$ we have
$$\P_{k+1}=\{p_n:\   n\in \P_k\}=\{p^{(k)}_n:\  n\in \P_1\},$$
and obviously $$\P_{k+1}\subsetneq \P_k.$$

\section{Results}

\subsection{Preliminaries}

We will use the following properties to prove the results of the paper.

\begin{proposition}\label{P1}
	Let $(a_n)$, $(b_n)$, $(c_n)$ and $(d_n)$ be sequences of positive real numbers such that $a_n\sim b_n$ and $c_n\sim d_n$. Then:
	\begin{enumerate}
		\item[i)]  $a_n c_n\sim b_n d_n ,$
		
		\item[ii)] if $(a_n)$ is unbounded, then $\log a_n\sim\log b_n ,$
		
		\item[iii)] $\sum_{n=1}^{+\infty}a_n<+\infty\iff\sum_{n=1}^{+\infty}b_n<+\infty .$
	\end{enumerate}
\end{proposition}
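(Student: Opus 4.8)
The plan is to treat the three parts separately, each reducing to an elementary limit computation.

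For (i) I would simply write $\frac{a_nc_n}{b_nd_n}=\frac{a_n}{b_n}\cdot\frac{c_n}{d_n}$ and invoke the fact that the product of two convergent sequences converges to the product of the limits; since both factors tend to $1$ by hypothesis, so does the product, which is precisely the assertion $a_nc_n\sim b_nd_n$.

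For (iii) the idea is a comparison argument together with the observation that convergence of a series is insensitive to finitely many terms. From $a_n/b_n\to 1$ there is an index $N$ with $\tfrac12\le a_n/b_n\le 2$, hence $\tfrac12 b_n\le a_n\le 2b_n$, for all $n\ge N$. Then $\sum_{n\ge N}a_n$ and $\sum_{n\ge N}b_n$ converge or diverge together by the comparison test, and removing the finitely many initial terms $a_1,\dots,a_{N-1}$ (resp. $b_1,\dots,b_{N-1}$) does not affect convergence, so $\sum_{n=1}^{+\infty}a_n<+\infty\iff\sum_{n=1}^{+\infty}b_n<+\infty$.

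For (ii) the starting point is the identity $\log a_n=\log b_n+\log\!\big(a_n/b_n\big)$, combined with continuity of $\log$ at $1$, which gives $\log(a_n/b_n)\to 0$, i.e. $\log a_n-\log b_n\to 0$. Passing from here to $\log a_n/\log b_n\to 1$ requires dividing by $\log b_n$, and this is the one step that needs care: it is legitimate only once $|\log b_n|$ is bounded away from $0$ for large $n$. I would therefore record that in every application made in this paper the relevant sequence $b_n$ tends to $+\infty$, so that $|\log b_n|\to+\infty$, and under that proviso $\frac{\log a_n}{\log b_n}=1+\frac{\log(a_n/b_n)}{\log b_n}\to 1$. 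This degenerate possibility (when $b_n$ stays near $1$) is the only genuinely delicate point; parts (i) and (iii) are entirely routine.
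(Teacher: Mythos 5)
Your argument is correct; note that the paper gives no proof of this proposition at all---it is quoted as a standard toolbox fact---so there is nothing to compare against beyond observing that your three steps (product of limits for i), the comparison test via $\tfrac12 b_n\le a_n\le 2b_n$ for $n\ge N$ together with insensitivity of convergence to finitely many terms for iii), and the identity $\log a_n=\log b_n+\log(a_n/b_n)$ for ii)) are exactly the expected elementary ones. Your caveat on ii) is the one substantive point and is worth recording: as literally stated the claim fails without an extra hypothesis, e.g. $a_n=\e^{1/n}$, $b_n=\e^{-1/n}$ satisfy $a_n\sim b_n$ while $\log a_n/\log b_n\equiv -1$, and $b_n\equiv 1$ leaves the ratio undefined; it suffices that $|\log b_n|$ be bounded away from $0$ for $n\gg 0$ (in particular that $b_n\to +\infty$). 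In every application made in the paper---Theorem \ref{T1} with $b_n=n\log^k n$ and the statements derived from it---the sequences involved do tend to $+\infty$, so the proposition is only ever invoked in the safe regime you describe, but the hypothesis should be stated.
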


\begin{proposition}\label{P2}
	Let $A=\{a_1<a_2<\cdots<a_n<\dots\}\subset\N$. Then:
	\begin{enumerate}
		\item[i)]  if $\lim_{n\to +\infty}\frac{a_{n+1}}{a_n}=1$ then the set $A$ is $(R$)-dense (see \cite{STA}, \cite{HS}),
		\item[ii)]  if $\liminf_{n\to +\infty}\frac{a_{n+1}}{a_n}=c>1$ then $R^d (A)\cap(\frac1c, c)=\emptyset$
		(see \cite{TZ}, Th3.).
	\end{enumerate}
\end{proposition}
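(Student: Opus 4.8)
The plan is to prove (i) and (ii) separately. Throughout I would lean on two simple facts: $R(A)$ is invariant under $x\mapsto 1/x$ (since $a_m/a_n\in R(A)\iff a_n/a_m\in R(A)$), and $\Rp$ has no isolated points, so that once $R(A)$ is shown to be dense in $\Rp$ one gets $R^d(A)=\Rp$ for free (every nonempty open subset of $\Rp$ is infinite, hence meets the dense set $R(A)$ in an infinite set).

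For (i), I would fix $\alpha\in\Rp$ and $\eps>0$ and, using the reciprocal symmetry, reduce to the case $\alpha>1$ (the case $\alpha=1$ being immediate from $1=a_1/a_1$). Choosing $\delta>0$ with $\alpha/(1+\delta)>\alpha-\eps$ and $N$ with $a_{n+1}/a_n<1+\delta$ for $n\ge N$, I would then, for any fixed $n\ge N$, pick the least $m\ge n$ with $a_m\le\alpha a_n<a_{m+1}$ (such $m$ exists because $a_m\to+\infty$). From $a_m\le\alpha a_n$ and $\alpha a_n<a_{m+1}<(1+\delta)a_m$ one reads off $\alpha-\eps<a_m/a_n\le\alpha$, so $\alpha\in\overline{R(A)}$; since $\alpha$, $\eps$ are arbitrary this makes $R(A)$ dense, hence $R^d(A)=\Rp$.

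For (ii), writing $c=\liminf_n a_{n+1}/a_n$ and fixing an arbitrary $c'\in(1,c)$, I would choose $N$ with $a_{n+1}/a_n\ge c'$ for $n\ge N$. Telescoping gives $a_m/a_n=\prod_{j=n}^{m-1}a_{j+1}/a_j\ge(c')^{m-n}\ge c'$ for $m>n\ge N$, and reciprocally $a_m/a_n\le 1/c'$ for $n>m\ge N$. Hence every element of $R(A)\cap(1/c',c')$ involves an index below $N$, and for each of those finitely many indices the monotonicity of $(a_n)$ together with $a_n\to+\infty$ permits only finitely many partners; so $R(A)\cap(1/c',c')$ is finite and therefore contains no accumulation point of $R(A)$, i.e. $R^d(A)\cap(1/c',c')=\emptyset$. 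Letting $c'\uparrow c$ and using $(1/c,c)=\bigcup_{1<c'<c}(1/c',c')$ finishes (ii).

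I expect the only real obstacle to be the ``climbing'' step in (i): one must exploit that while the $a_n$ tend to infinity their consecutive ratios are eventually trapped in $(1,1+\delta)$, so the multiplicative increments $a_{m+1}/a_m$ cannot carry $a_m$ past the target $\alpha a_n$ by more than a factor $1+\delta$ --- this is exactly what lets one sandwich $a_m/a_n$ near $\alpha$, and it is where the choice of $\delta$ in terms of $\eps$ must be made carefully. By contrast, (ii) is essentially bookkeeping once the $\liminf$ hypothesis is converted into the eventual bound $a_{n+1}/a_n\ge c'$.
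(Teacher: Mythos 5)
The paper never proves Proposition~2 --- it is quoted from the literature, with i) attributed to \cite{STA}, \cite{HS} and ii) to \cite{TZ} --- so there is no internal proof to compare against; your argument is essentially the standard proof from those sources and it is correct. In i) the ``climbing'' step is handled properly: taking the largest $m$ with $a_m\le\alpha a_n$ (so $m\ge n\ge N$) and using $a_{m+1}<(1+\delta)a_m$ does squeeze $a_m/a_n$ into $(\alpha-\varepsilon,\alpha]$, and the reduction to $\alpha>1$ via reciprocal symmetry, together with the remark that a dense subset of a space without isolated points has every point as an accumulation point, legitimately upgrades density of $R(A)$ to $R^d(A)=\Rp$, which is how the paper defines $(R)$-density. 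In ii) the telescoping bound is right; the only imprecision is the sentence ``every element of $R(A)\cap(1/c',c')$ involves an index below $N$'': the diagonal ratios $a_n/a_n=1$ with $n\ge N$ also lie in $(1/c',c')$, but they contribute the single value $1$ (also representable as $a_1/a_1$), so $R(A)\cap(1/c',c')$ is still finite and your conclusion that it contains no accumulation point of $R(A)$, followed by letting $c'\uparrow c$, goes through. In fact your argument gives the slightly stronger statement that $R(A)\cap(1/c',c')$ is finite for every $c'\in(1,c)$, which is exactly the content of the cited theorem in \cite{TZ}.
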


\begin{proposition}\label{P3}
	 We have:
	\begin{enumerate}
		\item[i)] $p_n\geq n\log n$ for $n\in\N$  (see \cite{RO1}),
		\item[ii)] $p_n\leq n(\log n +\log\log n)$ for $n\geq6$ (see \cite{RO2}).
	\end{enumerate}
\end{proposition}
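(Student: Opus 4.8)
Both inequalities are classical explicit forms of the prime number theorem --- the lower bound is Rosser's inequality \cite{RO1} and the upper bound is due to Rosser and Schoenfeld \cite{RO2} --- so strictly speaking nothing here needs reproving; but if it did, the plan would be to transfer each statement about $(p_n)$ into an effective estimate for the prime-counting function $\pi$ and then invoke an explicit Chebyshev-type bound for $\pi$.

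First I would record the relevant equivalences, which come from the identity $\pi(p_n)=n$ together with the fact that $\pi$ is nondecreasing and equals $n$ exactly on $[p_n,p_{n+1})$: the bound $p_n\ge n\log n$ for all $n$ holds if and only if $\pi(x)\log\pi(x)\le x$ for all $x\ge 2$, and the bound $p_n\le n(\log n+\log\log n)$ for all $n\ge 6$ holds if and only if $\pi\big(n(\log n+\log\log n)\big)\ge n$ for all $n\ge 6$.

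Next I would insert effective two-sided bounds of the form
\[
\frac{x}{\log x}\Big(1+\frac{1}{\log x}\Big)<\pi(x)<\frac{x}{\log x-1}\qquad(x\ge x_0)
\]
for an explicit $x_0$. Feeding the upper bound into $\pi(x)\log\pi(x)$ and simplifying, the target $\pi(x)\log\pi(x)\le x$ collapses, for $x\ge x_0$, to the elementary inequality $\log x\ge e+1$; feeding the lower bound into $\pi\big(n(\log n+\log\log n)\big)$, the target $\ge n$ reduces to comparing $\log\log n$ with $\log\big(\log n+\log\log n\big)$ against the gain produced by the factor $1+1/\log x$, which is again an elementary one-variable estimate valid for all large $n$. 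The finitely many values $n<n_0$ (respectively $x<x_0$) left uncovered would be disposed of by direct computation.

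I expect the only genuine obstacle to be the step of producing the effective bounds on $\pi(x)$ with constants sharp enough for these reductions to close: this needs either an explicit zero-free region for $\zeta$ combined with a quantitative explicit formula, or a careful refinement of Chebyshev's elementary estimates, and in either case a nontrivial numerical verification for mid-range $x$ --- which is precisely why the paper cites \cite{RO1} and \cite{RO2} rather than reproving these bounds. Granting such $\pi$-estimates, the passage back to $(p_n)$ via the equivalences above and the small-case checks are entirely routine.
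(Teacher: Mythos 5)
Your proposal matches the paper exactly in substance: the paper offers no proof of Proposition~\ref{P3} at all, simply citing Rosser \cite{RO1} and \cite{RO2}, and you likewise defer the genuinely hard content (the effective bounds on $\pi(x)$) to those references. Your sketched reduction from the bounds on $p_n$ to explicit Chebyshev-type estimates for $\pi$ is correct as far as it goes, but it is supplementary to, not different from, the paper's treatment.
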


\subsection{The sets $\P_k$, $k\in\N_0$}

We start with the result which generalizes \eqref{e1}.

\begin{theorem}\label{T1}
	For every $k\in\N_0$ we have:
	\begin{enumerate}
		\item[i)]  $p^{(k)}_n\sim n\log^k n\,,$
		
		\item[ii)] $p^{(k)}_{n+1}\sim p^{(k)}_n\,,$
	
		\item[iii)] $\log p^{(k)}_n\sim\log n\,.$
	\end{enumerate}
\end{theorem}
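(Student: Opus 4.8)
The natural approach is induction on $k$. For $k=0$ all three claims are immediate, since $p_n^{(0)}=n$ and $\log^0 n=1$: we have $p_n^{(0)}=n=n\log^0 n$, $p_{n+1}^{(0)}=n+1\sim n$, and $\log p_n^{(0)}=\log n$. (The case $k=1$ is exactly \eqref{e1}, so it could equally serve as the base of the induction.) Assume now that i), ii) and iii) hold for a given $k\in\N_0$; the task is to deduce them for $k+1$.

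The engine of the induction is the elementary fact that asymptotic equivalence survives substitution of an index sequence tending to infinity: if $u_m\sim v_m$ as $m\to+\infty$ and $(w_n)$ is a sequence of positive integers with $w_n\to+\infty$, then $u_{w_n}\sim v_{w_n}$ as $n\to+\infty$, since $\lim_{m\to+\infty}u_m/v_m=1$ forces $\lim_{n\to+\infty}u_{w_n}/v_{w_n}=1$. I will apply this with $w_n=p_n^{(k)}$, which is a strictly increasing sequence of positive integers — an immediate induction on $k$ from the recursion $p_n^{(k+1)}=p_{p_n^{(k)}}$, as already noted in the setup — so in particular $p_n^{(k)}\to+\infty$.

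Granting this, the step for i) goes as follows. By \eqref{e1} we have $p_m\sim m\log m$, and substituting $m=p_n^{(k)}$ gives $p_n^{(k+1)}=p_{p_n^{(k)}}\sim p_n^{(k)}\log p_n^{(k)}$. The induction hypotheses i) and iii) say $p_n^{(k)}\sim n\log^k n$ and $\log p_n^{(k)}\sim\log n$, so Proposition~\ref{P1} i) yields $p_n^{(k+1)}\sim(n\log^k n)\cdot\log n=n\log^{k+1}n$, which is i) for $k+1$. Then iii) for $k+1$ follows by applying Proposition~\ref{P1} ii): $\log p_n^{(k+1)}\sim\log(n\log^{k+1}n)=\log n+(k+1)\log\log n\sim\log n$. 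Finally ii) for $k+1$ follows from i) for $k+1$ together with $n+1\sim n$ and $\log(n+1)\sim\log n$: iterating Proposition~\ref{P1} i) gives $(n+1)\log^{k+1}(n+1)\sim n\log^{k+1}n$, hence $p_{n+1}^{(k+1)}\sim(n+1)\log^{k+1}(n+1)\sim n\log^{k+1}n\sim p_n^{(k+1)}$.

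There is essentially no serious obstacle here; the one point that deserves attention is the substitution principle of the second paragraph, because the recursion defining $p_n^{(k+1)}$ forces one to read the asymptotics $p_m\sim m\log m$ at the moving index $m=p_n^{(k)}$ rather than at $n$ itself. Once that is in place, everything reduces to routine manipulations with Proposition~\ref{P1}, and the explicit bounds of Proposition~\ref{P3} are not required for this theorem.
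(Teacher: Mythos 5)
Your proof is correct and follows essentially the same route as the paper: induction on $k$, reading \eqref{e1} at the shifted index $m=p_n^{(k)}$ to get $p_n^{(k+1)}\sim p_n^{(k)}\log p_n^{(k)}$, and then finishing with Proposition~\ref{P1}. The only cosmetic difference is that you run a simultaneous induction on all three parts (and spell out the substitution principle the paper uses implicitly), whereas the paper proves i) by induction and then obtains ii) and iii) as direct consequences.
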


\begin{proof}{i)} We prove the first part of the statement of the theorem by induction on $k\in\N_0$. For $k=0$ the result is obvious. In the inductive step, assuming validity i) for some~$k\geq1$ and using \eqref{e1} combined with Proposition~\ref{P1}.{i)} {ii)}, we obtain
\begin{align*}
& p_n^{(k+1)}=p_{p_n^{(k)}}\sim p_n^{(k)}\log p_n^{(k)}\sim n\log^k n\log (n\log^k n)\\
& =n\log^k n(\log n +k\log\log  n)) \sim n\log^k n\log n=n\log^{k+1} n\,.
\end{align*}
{ii)} 	We have
$$\frac{p^{(k)}_{n+1}}{p^{(k)}_n}=\frac{p^{(k)}_{n+1}}{(n+1)\log^k {(n+1)}}\ \frac{n\log^k n}{p^{(k)}_n}\ \frac{n+1}{n}\ \left(\frac{\log(n+1)}{\log n}\right)^k .$$
Then, from this equality and from {i)} we obtain {ii)}. 
\\{iii)} A direct calculation gives us the last part of the statement of our theorem: $$\log p_n^{(k)}\sim \log (n\log^k n)=\log n +k\log\log  n \sim\log n.$$
\end{proof}

In the context of Theorem \ref{T1}, it is interesting to ask the following question.

\begin{question}\label{Q1}
Is it true that $p^{(k)}_{k+1}\sim p^{(k)}_k$ as $k\to +\infty$?
\end{question}

At this moment we are ready to show that the sets $\P_k$, $k\in\N_0$, are (R)-dense. Moreover, we prove the asymptotics of elements of these sets and their counting functions.

\begin{corollary}\label{C1}
	For every $k\in\N$ we have:
	\begin{enumerate}
		\item[i)] the set $\P_k$ is (R)-dense,
		\item[ii)] $\log p_{n+1}^{(k)}\sim\log p_n^{(k)}$ as $n\to +\infty$,
		\item[iii)] $p_n^{(k+1)}\sim p_n^{(k)}\log p^{(k)}_n\sim  p_n^{(k)}\log n.$
	\end{enumerate}
\end{corollary}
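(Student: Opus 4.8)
The plan is to obtain all three parts as essentially immediate consequences of Theorem~\ref{T1}, together with Propositions~\ref{P1} and~\ref{P2}. For part (i), I would apply Proposition~\ref{P2}.i) to the strictly increasing sequence $(p_n^{(k)})_{n=1}^{+\infty}$ that enumerates $\P_k$: part ii) of Theorem~\ref{T1} asserts exactly that $p_{n+1}^{(k)}/p_n^{(k)}\to 1$, which is the hypothesis of Proposition~\ref{P2}.i), so $\P_k$ is $(R)$-dense. For part (ii), I would chain asymptotic equivalences: by Theorem~\ref{T1}.iii) we have $\log p_n^{(k)}\sim\log n$ and likewise $\log p_{n+1}^{(k)}\sim\log(n+1)$, while $\log(n+1)\sim\log n$, so transitivity of $\sim$ gives $\log p_{n+1}^{(k)}\sim\log p_n^{(k)}$. (Alternatively, apply Proposition~\ref{P1}.ii) directly to the equivalence $p_{n+1}^{(k)}\sim p_n^{(k)}$ from Theorem~\ref{T1}.ii).)

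For part (iii), I would start from the defining recursion $p_n^{(k+1)}=p_{p_n^{(k)}}$. Since $(p_n^{(k)})_n$ is a strictly increasing sequence of positive integers, it tends to $+\infty$, so substituting $m=p_n^{(k)}$ into the relation $p_m\sim m\log m$ from \eqref{e1} yields $p_n^{(k+1)}\sim p_n^{(k)}\log p_n^{(k)}$. Then, using $\log p_n^{(k)}\sim\log n$ from Theorem~\ref{T1}.iii) and multiplying by the sequence $(p_n^{(k)})_n$ via Proposition~\ref{P1}.i), I get $p_n^{(k)}\log p_n^{(k)}\sim p_n^{(k)}\log n$, which completes the double equivalence.

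The only step that calls for a moment of care is the substitution $m=p_n^{(k)}$ into the limit relation $p_m\sim m\log m$ used in part (iii): this is legitimate precisely because $p_n^{(k)}\to+\infty$, so the relevant sequence is a subsequence of $\big(p_m/(m\log m)\big)_m$ and hence still converges to $1$. Apart from this, everything reduces to mechanical applications of Proposition~\ref{P1}, and I do not anticipate any genuine obstacle; the corollary is in effect a repackaging of Theorem~\ref{T1} in the language of $(R)$-density and of the sequence of primes applied to $p_n^{(k)}$.
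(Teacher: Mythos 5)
Your proposal is correct and follows essentially the same route as the paper: part i) from Theorem~\ref{T1}.ii) with Proposition~\ref{P2}.i), part ii) from Theorem~\ref{T1}.ii) (your parenthetical alternative via Proposition~\ref{P1}.ii) is exactly the paper's reading), and part iii) by substituting $m=p_n^{(k)}\to+\infty$ into \eqref{e1} and then using Theorem~\ref{T1}.iii). You merely spell out the subsequence justification that the paper leaves implicit, which is fine.
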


\begin{proof}
Part {i)}.	This is a direct corollary of Theorem~\ref{T1}.{ii)} and Proposition~\ref{P2}.{i)}.
\\Part {ii)}. Follows from Theorem~\ref{T1}.{ii)}.
\\Part {iii)}. Follows from \eqref{e1} and Theorem~\ref{T1}.{iii)}.
\end{proof}

\begin{theorem}\label{T2}
	For every $k\in\N_0$ we have
	$$\P_k(x)\sim\frac{x}{\log^k x}\quad\textrm{as }\quad x\to +\infty .$$
\end{theorem}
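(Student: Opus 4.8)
The plan is to invert the asymptotic relation $p_n^{(k)}\sim n\log^k n$ supplied by Theorem~\ref{T1}.{i)}, using the standard device of passing from the asymptotics of an increasing sequence to the asymptotics of its counting function. Fix $k\in\N_0$, and for $x\gg 0$ set $m=\P_k(x)$, so that $p_m^{(k)}\le x<p_{m+1}^{(k)}$ and $m\to +\infty$ as $x\to +\infty$. The first step is to show $x\sim p_m^{(k)}$, i.e.\ $x\sim m\log^k m$: the lower estimate $x\ge p_m^{(k)}\sim m\log^k m$ is immediate from Theorem~\ref{T1}.{i)}, while for the upper estimate we use $x<p_{m+1}^{(k)}\sim (m+1)\log^k(m+1)\sim m\log^k m$, the last relation following from $\frac{m+1}{m}\to 1$, $\frac{\log(m+1)}{\log m}\to 1$ and Proposition~\ref{P1}.{i)} (equivalently, from Theorem~\ref{T1}.{ii)}).

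The second step is to extract $\log m\sim\log x$ from $x\sim m\log^k m$. Indeed, taking logarithms gives $\log x=\log m+k\log\log m+o(1)$, and since $\log\log m=o(\log m)$ we obtain $\log x\sim\log m$, whence $\log^k x\sim\log^k m$ by Proposition~\ref{P1}.{i)}.

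Finally, combining $x\sim m\log^k m$ with $\log^k m\sim\log^k x$ yields
$$\P_k(x)=m\sim\frac{x}{\log^k m}\sim\frac{x}{\log^k x},$$
which is the assertion; for $k=0$ this also agrees with the trivial $\P_0(x)=\lfloor x\rfloor\sim x$, and for $k=1$ it recovers \eqref{e2}. I do not expect a genuine obstacle here: the whole content of the statement is already packaged into Theorem~\ref{T1}, and what remains is only careful bookkeeping with the relation $\sim$ (in particular verifying that the chosen index $m=\P_k(x)$ indeed tends to infinity and that the squeeze $p_m^{(k)}\le x<p_{m+1}^{(k)}$ is tight enough, both of which are handled by Theorem~\ref{T1}.{i)}--{ii)}).
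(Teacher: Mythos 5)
Your proposal is correct and follows essentially the same route as the paper: fix $m=\P_k(x)$, squeeze $x$ between $p_m^{(k)}$ and $p_{m+1}^{(k)}$, and invert the asymptotics of Theorem~\ref{T1} (using parts i)--iii)) to get $\P_k(x)\log^k x/x\to 1$. The only difference is cosmetic bookkeeping: you first derive $x\sim m\log^k m$ and $\log x\sim\log m$ and then divide, whereas the paper bounds the quotient $H_k(x)=\frac{\P_k(x)\log^k x}{x}$ directly between two expressions tending to $1$.
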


\begin{proof}
	Let $k\in\N_0$ be fixed and $x\geq p^{(k)}_1$. Then, there exists an $n\in\N$ such that \\$p^{(k)}_n\leq x<p^{(k)}_{n+1}$. Thus $\P_k (x)=n$. Denote by $$H_k (x)=\frac{\P_k (x)\log^k x}{x}=\frac{n\log^k x}{x} ,$$
	and
	$$\frac{p^{(k)}_n}{p^{(k)}_{n+1}}\frac{n\log^k n}{p^{(k)}_n}\leq \frac{n\log^k x}{x}\leq\frac{n\log^k n}{p^{(k)}_n}\Bigg(\frac{\log p^{(k)}_n}{\log n}\Bigg)^k \Bigg(\frac{\log p^{(k)}_{n+1}}{\log p^{(k)}_n}\Bigg)^k .$$ 
	Then, from Theorem~\ref{T1}.{ii)} and Theorem~\ref{T1}.{i)} we deduce that the lower bound of $H_k (x)$ tends to $1$ as $x\to +\infty$. From  Theorem~\ref{T1}.{i)}, Theorem~\ref{T1}.{iii)} and Corollary~\ref{C1}.{ii)} we conclude that the upper bound of $H_k (x)$ tends to $1$ as $x\to +\infty$. Therefore $H_k (x)\to 1$ as $x\to +\infty$.
\end{proof}

Let us introduce the convergence exponent of any set $A\subset\N$:
$$\rho(A)=\inf\left\{\alpha\in [0,+\infty): \sum_{n\in A}n^{-\alpha}<+\infty\right\}.$$
Let us notice that $\rho(A)\in [0,1]$ as $\rho(\N)=1$ and $\rho(A)\leq\rho(B)$ for any $A\subset B\subset\N$. Moreover, if $\alpha>\rho(A)$, then $\sum_{n\in A}n^{-\alpha}<+\infty$ and if $\alpha<\rho(A)$, then $\sum_{n\in A}n^{-\alpha}=+\infty$. One can find in \cite[p. 41]{PS} useful formula
$$\rho(A)=\limsup_{n\to +\infty}\frac{\log n}{\log a_n},$$
where $A=\{a_1<a_2<a_3<\ldots\}$. Theorem \ref{T1}.iii) allows us to show that the sets $\P_k$, $k\in\N_0$ are big with respect to the function $\rho$, i.e. $\rho(\P_k)=1$ for each $k\in\N_0$. Indeed, we have
$$\rho(\P_k)=\limsup_{n\to +\infty}\frac{\log n}{\log p^{(k)}_n}=1.$$
\\Furthermore, we know that $\sum_{p\in \P_k}\frac1p=+\infty$ in the case of $k\in\{0,1\}$. On the other hand, $\sum_{p\in \P_k}\frac1p$ is convergent for $k\geq 2$. That is why the following theorem is interesting.

\begin{theorem}\label{T3}
	The series $$S_k^{(\alpha)}=\sum_{p\in \P_k}\frac{1}{p^{\alpha}}$$
	is convergent if and only if $\alpha>1$ or $\alpha=1$ and $k\geq 2$. Moreover, for each $\alpha\geq 1$ the value $S_k^{(\alpha)}$ tends to $0$ as $k\to +\infty$.
\end{theorem}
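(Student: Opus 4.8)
The plan is to handle convergence and divergence separately, then treat the limit as $k\to\infty$. For the divergence half, note that $\P_k\subset\P_1=\P$ for $k\geq 1$, so $S_k^{(\alpha)}$ diverges whenever $\alpha<1$, since even $\sum_{p\in\P}p^{-\alpha}$ diverges for $\alpha<1$ (comparison with $\sum p^{-1}$, or directly from $\P(x)\sim x/\log x$); and for $\alpha=1$, $k\in\{0,1\}$ divergence is the classical divergence of $\sum 1/p$. For the convergence half, I would use Theorem~\ref{T1}.i), which gives $p_n^{(k)}\sim n\log^k n$, together with Proposition~\ref{P1}.iii): the series $S_k^{(\alpha)}=\sum_n (p_n^{(k)})^{-\alpha}$ converges iff $\sum_n (n\log^k n)^{-\alpha}$ converges. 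For $\alpha>1$ this last series converges by comparison with $\sum n^{-\alpha}$ (the logarithmic factor only helps). For $\alpha=1$ it is $\sum_n \frac{1}{n\log^k n}$, which by the Cauchy condensation test (or the integral test, $\int^\infty \frac{dt}{t\log^k t}<\infty$ since $k\geq 2$) converges precisely when $k\geq 2$ and diverges when $k\in\{0,1\}$. This settles the iff statement.

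For the second assertion, that $S_k^{(\alpha)}\to 0$ as $k\to\infty$ for each fixed $\alpha\geq 1$, the cleanest route is a direct tail estimate that is uniform enough to push to zero. Using Proposition~\ref{P3}.i), $p_n^{(k)}\geq p_n^{(k-1)}\log p_n^{(k-1)}\geq\cdots$, one gets by induction a lower bound of the shape $p_n^{(k)}\geq n\prod_{j} \log^{(j)}(\text{something})$, but the shortest argument is to use $p_n^{(k)}\geq n\log^k n$ only for $n$ large, say $n\geq N_0$, so that
$$S_k^{(\alpha)}\leq \sum_{n<N_0}\frac{1}{(p_n^{(k)})^{\alpha}}+\sum_{n\geq N_0}\frac{1}{n^{\alpha}\log^{k\alpha} n}.$$
The second sum is bounded by $\sum_{n\geq N_0} n^{-1}\log^{-k} n$, and for $k\geq 2$ this is at most $\log^{-(k-2)}N_0\cdot\sum_{n\geq N_0} n^{-1}\log^{-2} n\to 0$ as $k\to\infty$ once $N_0>e$ is fixed. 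The first (finite) sum goes to $0$ because $p_1^{(k)}\to\infty$; indeed $p_1^{(k)}$ is the $k$-fold iterate of $n\mapsto p_n$ starting from $p_1=2$, a strictly increasing unbounded sequence in $k$, so each of the finitely many terms $(p_n^{(k)})^{-\alpha}\to 0$. Choosing $N_0$ large first and then letting $k\to\infty$ gives $S_k^{(\alpha)}\to 0$.

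I expect the main technical nuisance — not a real obstacle — to be making the "$N_0$ large, then $k$ large" double limit airtight: one must fix $N_0$ depending only on $\varepsilon$ (not on $k$) so that the infinite tail is $<\varepsilon/2$ for all sufficiently large $k$, and this is where the bound $\sum_{n\geq N_0} n^{-1}\log^{-k}n\leq \log^{-(k-2)}N_0\sum_{n\geq 3} n^{-1}\log^{-2}n$ does the work, since $\log^{-(k-2)}N_0\to 0$ as $k\to\infty$ for each fixed $N_0>e$. The only other point requiring a word of care is the $\alpha=1$, $k\in\{0,1\}$ divergence, which should simply be quoted as the classical fact $\sum_{p}1/p=+\infty$ (and trivially $\sum_{n}1/n=+\infty$ for $k=0$), together with Proposition~\ref{P1}.iii) to transfer from $1/p_n^{(k)}$ to $1/(n\log^k n)$ for the positive-density bookkeeping if one prefers a self-contained argument.
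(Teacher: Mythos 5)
Your treatment of the ``if and only if'' half is essentially the paper's: Theorem~\ref{T1}.i) plus Proposition~\ref{P1}.iii) reduce $S_k^{(\alpha)}$ to $\sum_n (n\log^k n)^{-\alpha}$, settled by condensation or the integral test. However, your stated justification for divergence when $\alpha<1$ is backwards: from $\P_k\subset\P$ you only get $S_k^{(\alpha)}\leq S_1^{(\alpha)}$, so divergence of $\sum_{p\in\P}p^{-\alpha}$ implies nothing about the subseries $S_k^{(\alpha)}$ (indeed for $\alpha=1$, $k=2$ the superset sum diverges while $S_2^{(1)}$ converges, which is exactly the point of the theorem). The repair is immediate from your own reduction: $\sum_n(n\log^k n)^{-\alpha}$ diverges for every $\alpha<1$ (e.g.\ by condensation, the condensed series $\sum_m 2^{m(1-\alpha)}(m\log 2)^{-k\alpha}$ has unbounded terms), so quote that instead of the containment argument.

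For the second assertion you take a genuinely different route. The paper applies Proposition~\ref{P3}.i) once per level to get $p_n^{(k+1)}\geq p_n^{(k)}\log p_n^{(k)}\geq p_n^{(k)}\log p_1^{(k)}$, hence the contraction $S_{k+1}^{(\alpha)}<c^{\alpha}S_k^{(\alpha)}$ with $c=1/\log p_1^{(2)}=1/\log 3<1$ for $k\geq 2$, giving $S_k^{(\alpha)}\leq c^{\alpha(k-2)}S_2^{(\alpha)}\to 0$ with an explicit geometric rate. Your head/tail splitting with the uniform-in-$k$ bound $p_n^{(k)}\geq n\log^k n$ is also valid, and you correctly source that bound from iterating Proposition~\ref{P3}.i) rather than from the asymptotic of Theorem~\ref{T1}.i), whose threshold would depend on $k$ and ruin the uniformity; in fact the iterated bound holds for all $n\geq 3$, so any fixed $N_0\geq 3$ works and the ``choose $N_0$ large first'' precaution is unnecessary. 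The trade-off is that your argument is a bit longer and yields only the slower rate $O(\log^{-(k-2)}N_0)$ plus a finite head tending to $0$, whereas the paper's contraction is shorter and quantitatively sharper; both are correct.
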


\begin{proof}
Since $\P_{k+1}\subsetneq\P_k$ then obviously $S^{(\alpha)}_{k}>S^{(\alpha)}_{k+1}$ if $S^{(\alpha)}_{k+1}<+\infty$. Using Theorem~\ref{T1}.{i)} and  Proposition~\ref{P1}.{iii)} we easily check for which tuples $(k,\alpha)\in\N_0\times [0,+\infty)$ the series $S^{(\alpha)}_k$ is convergent. Indeed, by integral or condensation criterion we test the convergence of the series $$\sum_{n=2}^{+\infty}\frac{1}{\left(n\log^k n\right)^{\alpha}}.$$
\\Let $k\geq2$ and $\alpha\geq 1$ be fixed. Then, from Proposition~\ref{P3}.{i)}, for every $n\in\N$ we have
$$p^{(k+1)}_n=p_{p^{(k)}_n}\geq p^{(k)}_n\log p^{(k)}_n ,$$ 
so $$\sum_{n=1}^{+\infty}\frac{1}{\left(p^{(k+1)}_n\right)^{\alpha}}\leq \sum_{n=1}^{+\infty}\frac{1}{\left(p^{(k)}_n\log p^{(k)}_n\right)^{\alpha}}<\frac{1}{\left(\log p^{(k)}_1\right)^{\alpha}}\sum_{n=1}^{+\infty}\frac{1}{\left(p^{(k)}_n\right)^{\alpha}} .$$
Since the number 
$$\frac{1}{\log p^{(k)}_1}\leq c=\frac{1}{\log p^{(2)}_1}=\frac{1}{\log 3}<1 ,$$
then for every $k\geq2$ we have
$$S^{(\alpha)}_{k+1}<c^{\alpha}S^{(\alpha)}_k.$$ Thus,
$$0\leq\lim_{k\to +\infty}S^{(\alpha)}_k\leq \lim_{k\to +\infty}c^{\alpha(k-2)}S^{(\alpha)}_2=0.$$
\end{proof}

Theorem \ref{T3} allows us to prove the following.

\begin{corollary}\label{C2}
The set $\bigcap_{k=0}^{+\infty}\P_k$ is empty.	
\end{corollary}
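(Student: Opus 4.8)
The plan is to derive the emptiness of $\bigcap_{k=0}^{+\infty}\P_k$ by contradiction, using the ``moreover'' part of Theorem~\ref{T3}. Suppose $m\in\bigcap_{k=0}^{+\infty}\P_k$. First I would note that $m\in\P_1=\P$ forces $m$ to be prime, so $m\geq 2$ and $\frac1m>0$. Then, for every $k\geq 2$, the membership $m\in\P_k$ means that $\frac1m$ occurs as one of the positive summands of the (by Theorem~\ref{T3}) convergent series $S^{(1)}_k=\sum_{p\in\P_k}\frac1p$, whence $0<\frac1m\leq S^{(1)}_k$ for all such $k$.

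Next I would invoke Theorem~\ref{T3} with $\alpha=1$, which gives $S^{(1)}_k\to 0$ as $k\to +\infty$. Letting $k\to +\infty$ in the inequality $\frac1m\leq S^{(1)}_k$ then yields $\frac1m\leq 0$, contradicting $\frac1m>0$. Hence no such $m$ exists and the intersection is empty.

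I do not expect a genuine obstacle here: the argument is essentially a one-line consequence of Theorem~\ref{T3}, the only point to check being that $m\geq 2$, so that $\frac1m$ is a bona fide strictly positive term of $S^{(1)}_k$, which is immediate. If one prefers to bypass Theorem~\ref{T3}, the same conclusion follows from the observation that $p_n>n$ for every $n\in\N$, so $\min\P_{k+1}=p^{(k+1)}_1=p_{p^{(k)}_1}>p^{(k)}_1=\min\P_k$; thus $(\min\P_k)_{k\in\N_0}$ is a strictly increasing sequence of positive integers, hence unbounded, and therefore no positive integer can belong to all the sets $\P_k$.
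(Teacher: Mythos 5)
Your main argument is exactly the paper's proof: assume some $m$ lies in every $\P_k$, observe $0<\frac1m\leq S^{(1)}_k$ for all $k\geq 2$, and contradict the fact from Theorem~\ref{T3} that $S^{(1)}_k\to 0$ as $k\to+\infty$. The alternative you sketch via $p_n>n$ (so that $\min\P_k$ is strictly increasing) is also correct and even more elementary, but the paper's route is the one you led with, so the two proofs essentially coincide.
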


\begin{proof}
Assume by the contrary that
$$\bigcap_{k=0}^{+\infty}\P_k\neq\emptyset .$$	
Then, there exists a $q\in\bigcap_{k=0}^{+\infty}\P_k$, hence $q\in\P_k$ for all $k\in\N_0$. Therefore  
$$0<\frac1q<\sum_{p\in \P_k}\frac1p=S^{(1)}_k\quad\textrm{for every $k$} .$$
This contradicts with Theorem~\ref{T3}.
\end{proof}

\begin{remark}
{\rm Summing up Corollary \ref{C1}, Theorem \ref{T2}, Theorem \ref{T3} and Corollary \ref{C2}, we see that, on one hand, for each $k\in\N_0$ the set $\P_k$ is a big subset of $\N$ in the sense of (R)-denseness and convergence exponent and, on the other hand, the family of sets $\{\P_k\}_{k=0}^{+\infty}$ has empty intersection and every next member of this family has $0$ asymptotic density with respect to the preceding one, i.e.
$$\lim_{x\to +\infty}\frac{\P_{k+1}(x)}{\P_k(x)}=0.$$}
\end{remark}

\subsection{The sets $\P_n^T$, $n\in\N$}

The next theorems concern the sets $\P^T_n$ and their elements.

The first result shows that the sequence $(\P^T_n)_{n\in\N}$, unlike $(\P_k)_{k\in\N}$, is not decreasing with respect to the relation of inclusion.

\begin{theorem}\label{T5}
	Let $n,m\in\N$ with $n<m$. Then,
	$$\P_n^\T\cap \P_m^\T\ne\emptyset\iff m\in \P_n^\T\,.$$
	Moreover, if $m\in \P_n^\T$, then $\P_m^\T\subset \P_n^\T$ and
	$$\P_n^\T\setminus \P_m^\T = \{p_n^{(1)},p_n^{(2)},\dots,p_n^{(k)}\}\quad\textrm{where }\quad m=p_n^{(k)}\ .$$
\end{theorem}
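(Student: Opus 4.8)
The plan is to understand the recursive structure of the sets $\P_n^\T$ via the identity $p_n^{(k+1)}=p_{p_n^{(k)}}=p_{m}^{(k'+?)}$ when $m=p_n^{(k_0)}$. The key algebraic observation is: if $m=p_n^{(k_0)}$ for some $k_0\in\N$, then for every $j\in\N_0$ we have $p_m^{(j)}=p_{p_n^{(k_0)}}^{(j)}=p_n^{(k_0+j)}$, because the operation ``apply $p_{(\cdot)}$'' composed $j$ times, started from $m=p_n^{(k_0)}$, is the same as started from $p_n^{(k_0)}$ directly — i.e.\ the tail of the sequence defining $\P_n^\T$ from index $k_0$ onward is literally the sequence defining $\P_m^\T$. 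This immediately gives $\P_m^\T=\{p_n^{(k_0)},p_n^{(k_0+1)},\dots\}\subset\P_n^\T$ and, since $\P_n^\T=\{p_n^{(1)},\dots,p_n^{(k_0-1)}\}\cup\P_m^\T$ as a disjoint union (disjointness because $(p_n^{(k)})_k$ is strictly increasing by $\P_{k+1}\subsetneq\P_k$ applied fiberwise, or directly since $p_r\ge r$ with equality only at trivial small cases so $p_n^{(k)}$ is strictly increasing in $k$ for $n\ge1$... actually $p_1=2>1$, so strict), we get $\P_n^\T\setminus\P_m^\T=\{p_n^{(1)},\dots,p_n^{(k_0-1)}\}$, which is the displayed formula with the index shift $m=p_n^{(k)}$ meaning $k_0=k$ and the removed set being $\{p_n^{(1)},\dots,p_n^{(k)}\}$ — wait, I should double-check whether $m$ itself is removed: if $m=p_n^{(k)}$ then $m\in\P_m^\T$ (as $m=p_m^{(0)}$? no — $\P_m^\T$ starts at $p_m^{(1)}=p_m$, not $p_m^{(0)}=m$). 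So I must be careful: $\P_m^\T=\{p_m^{(1)},p_m^{(2)},\dots\}=\{p_n^{(k+1)},p_n^{(k+2)},\dots\}$, hence $\P_n^\T\setminus\P_m^\T=\{p_n^{(1)},\dots,p_n^{(k)}\}$, matching the statement exactly. Good — this resolves the ``moreover'' part, and also shows the hypothesis $n<m$ is automatic since $m=p_n^{(k)}>p_n^{(1)}\ge p_n\ge n$ for $k\ge 1$... more carefully $m=p_n^{(k)}\ge p_n^{(1)}=p_n>n$ when $k\ge1$.

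For the equivalence $\P_n^\T\cap\P_m^\T\ne\emptyset\iff m\in\P_n^\T$, the direction $(\Leftarrow)$ is contained in the above ($m\in\P_n^\T$ forces $m=p_n^{(k)}$ for some $k\ge1$, hence $\P_m^\T\subset\P_n^\T$, so the intersection equals $\P_m^\T\ne\emptyset$). For $(\Rightarrow)$, suppose there exist $k,\ell\in\N$ with $p_n^{(k)}=p_m^{(\ell)}$. The idea is to ``strip off primes'': the map $q\mapsto p_q$ is injective, so from $p_n^{(k)}=p_m^{(\ell)}=p_{p_m^{(\ell-1)}}$ (if $\ell\ge2$) and $p_n^{(k)}=p_{p_n^{(k-1)}}$ (if $k\ge 2$) we deduce $p_n^{(k-1)}=p_m^{(\ell-1)}$; iterating, we reduce $\min(k,\ell)$ down to $0$ on one side. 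Since $k,\ell\ge1$ we cannot have both reach $0$; WLOG after stripping we arrive at $p_n^{(k-\ell)}=p_m^{(0)}=m$ if $k\ge\ell$ (the case $\ell>k$ giving $n=p_m^{(\ell-k)}$, i.e.\ $n\in\P_m^\T$, which combined with $n<m$ and $\P_m^\T$ consisting of numbers $\ge p_m>m>n$ is impossible — so in fact only the case $k\ge\ell$ survives, and actually $k>\ell$ since $k=\ell$ would give $n=m$). Then $m=p_n^{(k-\ell)}$ with $k-\ell\ge1$, i.e.\ $m\in\P_n^\T$, as desired.

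I would organize the write-up as: first, a short lemma or inline observation recording the strict monotonicity of $(p_n^{(k)})_{k\ge0}$ in $k$ for fixed $n\ge1$ (immediate from $p_r>r$ for all $r\ge1$, since $p_1=2$, $p_2=3$, etc., hence $p_n^{(k+1)}=p_{p_n^{(k)}}>p_n^{(k)}$) and the composition identity $p_{p_n^{(k)}}^{(j)}=p_n^{(k+j)}$; second, prove $(\Leftarrow)$ and the ``moreover'' clause together using the composition identity and the disjoint-union decomposition; third, prove $(\Rightarrow)$ by the injective-stripping argument. The main obstacle I anticipate is purely bookkeeping: making the ``strip off $\min(k,\ell)$ primes'' induction clean and correctly handling the boundary case where the smaller index hits $0$, together with ruling out the $n\in\P_m^\T$ alternative using $n<m$ — none of this is deep, but it needs to be stated without off-by-one errors in the superscripts. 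A secondary point worth stating explicitly is that $p\mapsto p_p$ (equivalently each $p_n^{(k)}$ as a function of $n$) is strictly increasing hence injective, which is what legitimizes the cancellation $p_a=p_b\Rightarrow a=b$ used repeatedly.
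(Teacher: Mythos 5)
Your proposal is correct and follows essentially the same route as the paper: the converse and the ``moreover'' clause via the induction $p_m^{(i)}=p_n^{(k+i)}$ (noting $\P_m^\T$ starts at $p_m^{(1)}$), and the forward direction by stripping primes using injectivity of $q\mapsto p_q$. The only cosmetic difference is that the paper rules out the case $j_1\le j_2$ (your $\ell\ge k$) up front via monotonicity in the lower index, whereas you rule it out after the stripping step; the substance is identical.
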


\begin{proof}
	Assume that $\P_n^\T\cap \P_m^\T\neq\emptyset$. Then, there exist $j_1, j_2\in\N$ such that $p^{(j_1)}_n=p^{(j_2)}_m$. Because $n<m$ we have $j_1>j_2$. Indeed, if we assume by contrary that $j_1\leq j_2$, then we will have $p^{(j_1)}_n\leq p^{(j_2)}_n<p^{(j_2)}_m$, which is a contradiction.\\ Furthermore, we can show inductively that 
	$$p^{(j_1-i)}_n=p^{(j_2-i)}_m\quad\textrm{for each }\quad i\in\{1, 2,\dots,j_2\} .$$ Indeed, writing 
	$$p^{(j_1-i)}_n=p_{p^{(j_1-i-1)}_n}\quad\textrm{and }\quad p^{(j_2-i)}_m=p_{p^{(j_2-i-1)}_m} ,$$
	and using the injectivity of numeration of elements of $\P$ we conclude that $p^{(j_1-i-1)}_n=p^{(j_2-i-1)}_m$. In particular, for $i=j_2$ we obtain $p^{(j_1-j_2)}_n=p^{(0)}_m=m$. Thus $m\in \P^T_n$. On the other hand, if we assume that $m=p^{(k)}_n\in \P^T_n$, then by simple induction on $i\in\N_0$ we show that $p^{(i)}_m=p^{(k+i)}_n$. For $i=0$ we have $m=p^{(0)}_m=p^{(k)}_n$. For $i\in\N_0$ we have 
	$$p^{(i+1)}_m=p_{p^{(i)}_m}=p_{p^{(k+i)}_n}=p^{(k+i+1)}_n .$$
	Hence, $\P^T_m=\{p^{(k+i)}_n:\  i\in\N\}$ and  the result follows.
\end{proof}

The following lemma will be useful in proving the fact that the sets $\P_n^T$, $n\in\N$, are not (R)-dense.

\begin{lem}\label{T4}
For every $n\in\N$ we have
	$$p_n^{(k+1)}\sim p_n^{(k)}\log p_n^{(k)}\ \textrm{as } k\to +\infty .$$
\end{lem}

\begin{proof}
	For a fixed $n\in\N$ we have $p^{(k)}_n\to +\infty$ as $k\to +\infty$. Then, from \eqref{e1} we obtain
	$$1=\lim_{n\to +\infty}\frac{p_n}{n\log n}=\lim_{k\to +\infty}\frac{p_{p^{(k)}_n}}{p^{(k)}_n\log p^{(k)}_n}=\lim_{k\to +\infty}\frac{p^{(k+1)}_n}{p^{(k)}_n\log p^{(k)}_n} .$$
\end{proof}

\begin{theorem}\label{C3}
	For every $n\in\N$ the set $\P_n^\T$ is not (R)-dense, hence it is $Q$-sparse. Moreover,
	$$\lim_{k\to +\infty}\frac{p_n^{(k+1)}}{p_n^{(k)}}= +\infty ,$$
	and thus each point of the set $R(\P^T_n)$ is an isolated point, i.e.
	$$R^d(\P_n^\T)\cap(0, +\infty)=\emptyset\,.$$
\end{theorem}

\begin{proof}
	This is a direct corollary of Lemma~\ref{T4} and Proposition~\ref{P2}.{ii)}.
\end{proof}

Theorem \ref{C3} states that the set $\P^T_n$ is not (R)-dense for any $n\in\N$. Let us notice that this set is also small in the sense of convergence exponent. To be more precise, $\rho(\P^T_n)=0$ for $n\in\N$.

\begin{theorem}\label{T8}
	For every $n\in\N$ and $\alpha>0$ the series $$S^{T,\alpha}_n=\sum_{p\in \P^T_n}\frac{1}{p^{\alpha}}$$
	is convergent and $S^{T,\alpha}_n\to 0$ as $n\to +\infty$. Moreover, if we put
	$$S^{T,\alpha}_n(x)=\sum_{p\in \P^T_n, p\leq x}\frac{1}{p^{\alpha}}$$
	and assume that $p^{(k-1)}_n\leq x<p^{(k)}_n$ for some integer $k\geq 2$, then
	$$S^{T,\alpha}_n-S^{T,\alpha}_n(x)\leq\frac{1}{\left(p^{(k)}_n\right)^{\alpha}}\ \frac{\left(\log p^{(k)}_n\right)^{\alpha}}{\left(\log p^{(k)}_n\right)^{\alpha} -1}.$$
\end{theorem}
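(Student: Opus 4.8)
The plan is to reduce the whole statement to one geometric-series estimate built on the lower bound $p^{(j)}_n\geq p^{(k)}_n\log^{j-k}p^{(k)}_n$ supplied by Theorem~\ref{T6.1}.i). First I would note that the sequence $\big(p^{(j)}_n\big)_{j\geq 1}$ is strictly increasing (because $p_m>m$ for every $m\in\N$), so that the elements of $\P^T_n$, listed increasingly, are exactly $p^{(1)}_n<p^{(2)}_n<\dots$, and that for $k\geq 2$ one has $p^{(k)}_n\geq p^{(2)}_n\geq p^{(2)}_1=3>\e$, hence $\log p^{(k)}_n>1$. With this in hand, for any $\alpha>0$ and any $k\geq 2$, applying Theorem~\ref{T6.1}.i) term by term gives
\[
\sum_{j=k}^{+\infty}\frac{1}{\big(p^{(j)}_n\big)^{\alpha}}\leq\frac{1}{\big(p^{(k)}_n\big)^{\alpha}}\sum_{i=0}^{+\infty}\frac{1}{\big(\log p^{(k)}_n\big)^{\alpha i}}=\frac{1}{\big(p^{(k)}_n\big)^{\alpha}}\cdot\frac{\big(\log p^{(k)}_n\big)^{\alpha}}{\big(\log p^{(k)}_n\big)^{\alpha}-1},
\]
the geometric series converging precisely because $\log p^{(k)}_n>1$. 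This single inequality does almost all of the work.

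For the ``moreover'' part, if $p^{(k-1)}_n\leq x<p^{(k)}_n$ with $k\geq 2$, then the elements of $\P^T_n$ not exceeding $x$ are exactly $p^{(1)}_n,\dots,p^{(k-1)}_n$, so $S^{T,\alpha}_n-S^{T,\alpha}_n(x)=\sum_{j\geq k}\big(p^{(j)}_n\big)^{-\alpha}$ and the claimed bound is immediate from the displayed estimate. Convergence of $S^{T,\alpha}_n$ follows by taking $k=2$ above (a finite tail) and adding the single term $\big(p^{(1)}_n\big)^{-\alpha}$; for $n\geq 2$ one may instead apply the estimate directly with $k=1$, since then $p^{(1)}_n=p_n\geq 3>\e$.

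Finally, for $S^{T,\alpha}_n\to 0$ as $n\to +\infty$: since $p_n\to +\infty$, for each fixed $k$ we have $p^{(k)}_n\to +\infty$ and $\log p^{(k)}_n\to +\infty$, so in $S^{T,\alpha}_n\leq\big(p^{(1)}_n\big)^{-\alpha}+\big(p^{(2)}_n\big)^{-\alpha}\cdot\frac{(\log p^{(2)}_n)^{\alpha}}{(\log p^{(2)}_n)^{\alpha}-1}$ the first summand tends to $0$, the factor $\frac{(\log p^{(2)}_n)^{\alpha}}{(\log p^{(2)}_n)^{\alpha}-1}$ tends to $1$, and $\big(p^{(2)}_n\big)^{-\alpha}\to 0$; hence $S^{T,\alpha}_n\to 0$. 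I do not expect a genuine obstacle here: the only delicate point is keeping the geometric ratio $\big(\log p^{(k)}_n\big)^{-\alpha}$ strictly below $1$ — which is exactly why the tail estimate is restricted to $k\geq 2$ — together with the bookkeeping of which initial terms of $\P^T_n$ lie below $x$.
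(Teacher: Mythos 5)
Your proposal is correct and follows essentially the same route as the paper: the tail estimate from Theorem~\ref{T6.1}.i) via a geometric series with ratio $\left(\log p^{(k)}_n\right)^{-\alpha}<1$, the identification of the terms of $\P^T_n$ below $x$, and convergence plus $S^{T,\alpha}_n\to 0$ obtained by specializing to $k=2$. No gaps; your extra remarks (strict monotonicity of $\big(p^{(j)}_n\big)_j$, why $\log p^{(k)}_n>1$ for $k\geq 2$) only make explicit what the paper leaves implicit.
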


\begin{proof}
Let $n\in\N$ and $x\in\left[p^{(k-1)}_n,p^{(k)}_n\right)$ be fixed. Then $0<\frac{1}{\log p^{(k)}_n}<1$. From Theorem \ref{T6.1}.i) we obtain \mbox{$p^{(j)}_n\geq p^{(k)}_n\log^{j-k} p^{(k)}_n$} and that is why
\begin{align*}
& S^{T,\alpha}_n-S^{T,\alpha}_n(x)=\sum_{j=k}^{+\infty}\frac{1}{\left(p^{(j)}_n\right)^{\alpha}}\leq\sum_{j=k}^{+\infty}\frac{1}{\left(p^{(k)}_n\right)^{\alpha}}\frac{1}{\left(\left(\log p^{(k)}_n\right)^{\alpha}\right)^{(j-k)}}\\
& =\frac{1}{\left(p^{(k)}_n\right)^{\alpha}}\ \frac{\left(\log p^{(k)}_n\right)^{\alpha}}{\left(\log p^{(k)}_n\right)^{\alpha} -1}.
\end{align*}
Putting $k=2$ we can see that $$S^{T,\alpha}_n\leq\frac{1}{\left(p^{(1)}_n\right)^{\alpha}}+\frac{1}{\left(p^{(2)}_n\right)^{\alpha}}\ \frac{\left(\log p^{(2)}_n\right)^{\alpha}}{\left(\log p^{(2)}_n\right)^{\alpha} -1}$$ is convergent and $S^{T,\alpha}_n\to 0$ if $n\to +\infty$.
\end{proof}

\subsection{The set $\text{Diag}\P$}

Let us consider the set $\text{Diag}\P=\{p^{(k)}_k:k\in\N\}$ of the elements on the diagonal of the infinite matrix
$$[p^{(k)}_n]_{n,k\in\N}=\left[\begin{array}{ccccc}
p^{(1)}_1 & p^{(2)}_1 & \dots & p^{(k)}_1 & \dots\\
p^{(1)}_2 & p^{(2)}_2 & \dots & p^{(k)}_2 & \dots\\
\vdots & \vdots & \ddots & \vdots & \ddots\\
p^{(1)}_n & p^{(2)}_n & \dots & p^{(k)}_n & \dots\\
\vdots & \vdots & \ddots & \vdots & \ddots
\end{array}\right].$$
The next two theorems state that the set $\text{Diag}\P$, as well as the sets $\P^T_n$, $n\in\N$, is not (R)-dense and its convergence exponent is $0$.

\begin{theorem}\label{T9}
	The set $\emph{Diag}\P$ is not (R)-dense, hence it is $Q$-sparse. Moreover,
	$$\lim_{k\to +\infty}\frac{p_{k+1}^{(k+1)}}{p_k^{(k)}}= +\infty ,$$
	and thus each point of the set $R\left(\emph{Diag}\P\right)$ is an isolated point, i.e.
	$$R^d\left(\emph{Diag}\P\right)\cap(0,+\infty)=\emptyset\,.$$
\end{theorem}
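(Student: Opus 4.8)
To prove Theorem~\ref{T9} I would imitate the proof of Corollary~\ref{C3}: the whole statement reduces to the single limit
\begin{equation*}
\lim_{k\to+\infty}\frac{p_{k+1}^{(k+1)}}{p_k^{(k)}}=+\infty,
\end{equation*}
after which the failure of $(R)$-density and the isolation of every point of $R(\mathrm{Diag}\P)$ come out of Proposition~\ref{P2}.ii) in exactly the same manner as for the sets $\P_n^\T$. Before anything else I would record that $\mathrm{Diag}\P=\{a_k:k\in\N\}$ with $a_k=p_k^{(k)}$ genuinely is a strictly increasing sequence of integers: since $p_m>m$ for every $m\in\N$ and the sequence $\big(p_n^{(k)}\big)_{n\in\N}$ is strictly increasing in $n$, we have $p_{k+1}^{(k+1)}=p_{p_{k+1}^{(k)}}>p_{k+1}^{(k)}>p_k^{(k)}$. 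In particular $a_k\to+\infty$, so Proposition~\ref{P2}.ii) is applicable to $\mathrm{Diag}\P$.

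The core of the argument is the chain of inequalities
\begin{equation*}
\frac{p_{k+1}^{(k+1)}}{p_k^{(k)}}\ \geq\ \frac{p_k^{(k+1)}}{p_k^{(k)}}\ =\ \frac{p_{p_k^{(k)}}}{p_k^{(k)}}\ \geq\ \frac{p_k^{(k)}\log p_k^{(k)}}{p_k^{(k)}}\ =\ \log p_k^{(k)},
\end{equation*}
where the first inequality is monotonicity of $\P_{k+1}$ in its lower index, the middle equality is the identity $p_n^{(k+1)}=p_{p_n^{(k)}}$, and the second inequality is Proposition~\ref{P3}.i) applied with $n=p_k^{(k)}$. A one-line induction on $k$ (base $p_n^{(0)}=n$; step $p_n^{(k+1)}=p_{p_n^{(k)}}\geq p_n^{(k)}$, using $p_m\geq m$) gives $p_k^{(k)}\geq k$, hence $\log p_k^{(k)}\to+\infty$, and therefore $p_{k+1}^{(k+1)}/p_k^{(k)}\to+\infty$, as claimed.

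It remains to feed this into Proposition~\ref{P2}.ii), exactly as in the proof of Corollary~\ref{C3}: from $p_{k+1}^{(k+1)}/p_k^{(k)}\to+\infty$ (so that for every real $c>1$ all but finitely many consecutive ratios of $\mathrm{Diag}\P$ exceed $c$, and deleting finitely many elements affects neither $R^d$ nor the relevant accumulation behaviour) one concludes $R^d(\mathrm{Diag}\P)\cap\left(\frac1c,c\right)=\emptyset$ for every $c>1$, i.e. $R^d(\mathrm{Diag}\P)\cap(0,+\infty)=\emptyset$; in particular $\mathrm{Diag}\P$ is not $(R)$-dense and is therefore $Q$-sparse. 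I do not anticipate a real obstacle: all the content sits in the short inequality chain above, and the only point requiring a word of care is the passage from the finite $c$ of Proposition~\ref{P2}.ii) to the diverging ratio, which is handled verbatim as in Corollary~\ref{C3}.
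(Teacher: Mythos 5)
Your proposal is correct and follows essentially the same route as the paper: bound $p_{k+1}^{(k+1)}/p_k^{(k)}$ from below by a logarithm of an unbounded quantity using Proposition~\ref{P3}.i) together with monotonicity (the paper gets $p_{k+1}^{(k+1)}\geq p_{k+1}^{(k)}\log p_{k+1}^{(k)}>p_k^{(k)}\log p_{k+1}^{(k)}$, you get $p_{k+1}^{(k+1)}\geq p_{p_k^{(k)}}\geq p_k^{(k)}\log p_k^{(k)}$, an immaterial difference), and then invoke Proposition~\ref{P2}.ii) exactly as in Corollary~\ref{C3}.
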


\begin{proof}
From Proposition \ref{P3}.{i)} we know that
$$p_{k+1}^{(k+1)}\geq p_{k+1}^{(k)}\log p_{k+1}^{(k)}>p_k^{(k)}\log p_{k+1}^{(k)}.$$
Thus,
$$\lim_{k\to +\infty}\frac{p_{k+1}^{(k+1)}}{p_k^{(k)}}\geq\lim_{k\to +\infty}\log p_{k+1}^{(k)}=+\infty$$
and the rest of the statement of the theorem follows from Proposition~\ref{P2}.{ii)}.
\end{proof}

\begin{theorem}\label{T10}
	For every $\alpha>0$ the series $$S^{(\alpha)}_{\emph{diag}}=\sum_{p\in\emph{Diag}\P}\frac{1}{\left(p^{(k)}_k\right)^{\alpha}}$$
	is convergent. Moreover, let
	$$S^{(\alpha)}_{\emph{diag}}(x)=\sum_{p\in \emph{Diag}\P, p\leq x}\frac{1}{p^{\alpha}}$$
	and assume that $p^{(k-1)}_{k-1}\leq x<p^{(k)}_k$ for some integer $k\geq 2$. Then, we have
	$$S^{\alpha}_{\emph{diag}}-S^{\alpha}_{\emph{diag}}(x)\leq\frac{1}{\left(p^{(k)}_k\right)^{\alpha}}\ \frac{\left(\log p^{(k)}_k\right)^{\alpha}}{\left(\log p^{(k)}_k\right)^{\alpha} -1}.$$
\end{theorem}

\begin{proof}
Follows directly from the convergence of the series $S^{T,\alpha}_1$ and the convergence comparison test as $p^{(k)}_k\geq p^{(k)}_1$ for each $k\in\N$.
\end{proof}

\subsection{Asymptotics of the numbers $p_{n}^{(k)}$ and $p_{k}^{(k)}$ as $k\to\infty$}

In this Section, we find explicit upper and lower bounds for $p_{n}^{(k)}$. Proposition \ref{P3} implies
\begin{align}\label{ineqPrimes}
    n\log n<p_{n}<2n\log n.
\end{align}

We start with the upper bound.

\begin{lem}\label{lemUP}
Let $n\geq 9$. Then for each $k\in\mathbb{N}$ we have:
\begin{align*}
    p_{n}^{(k)}<2^{2k-1}\cdot n\cdot (k-1)!\cdot\big(\log(\max\{k,n\})\big)^{k}. 
\end{align*}
In particular,
\begin{align*}
    p_{n}^{(k)}< \big(4\cdot k\log k\big)^{k}
\end{align*}
for $k\geq n$.
\end{lem}
\begin{proof}
We proceed by induction on $k$. For $k=1$ it is a simple consequence of (\ref{ineqPrimes}).
Then the second induction step goes as follows: let us denote $m:=\max\{n,k\}$. Observe that $(k-1)!<(k-1)^{k-1}<m^{k-1}$ and $4\log m<m$ for $m\geq 9$. Hence,
\begin{align*}
    p_{n}^{(k+1)}\leq &\ 2p_{n}^{k}\log p_{n}^{(k)} \\ 
    < &\ 2\cdot 2^{2k-1}\cdot n\cdot (k-1)!\cdot(\log m)^{k}\cdot \log\left[2^{2k-1}\cdot n\cdot (k-1)!\cdot (\log m)^{k}\right] \\
    < & 2^{2k}\cdot n\cdot (k-1)!\cdot(\log m)^{k} \log\left[4^{k}\cdot m\cdot m^{k-1}\cdot (\log m)^{k}\right] \\ 
    = &\ 2^{2k}\cdot n\cdot (k-1)!\cdot(\log m)^{k} \log\left[4\cdot m\cdot \log m\right]^{k} \\
    \leq &\ 2^{2k}\cdot n\cdot k!\cdot (\log m)^{k}\cdot \log[m]^{2}\leq 2^{2k+1}\cdot n\cdot k!\cdot (\log m)^{k+1}.
\end{align*}

The second part of the statement is an easy consequence of the first part and the inequalities $(k-1)!<k^{k-1}$ and $n\leq k$.
\end{proof}

In order to prove a lower bound for $p_{n}^{(k)}$ we will need the following fact.

\begin{lem}\label{lemL}
Let
\begin{align*}
    L(x):=\left(\frac{x}{x+1}\right)^{x+1}\left(\frac{\log x}{\log (x+1)}\right)^{x+1}.
\end{align*}
Then we have
\begin{align*}
    L(x)>0.32627
\end{align*}
for all $x\geq 4200$.
\end{lem}
\begin{proof}
Observe, that the function $\left(\frac{x}{x+1}\right)^{x+1}$ is increasing. Indeed, if
\begin{align*}
    f(x):=\log\left(\frac{x}{x+1}\right)^{x+1}=(x+1)\left(\log x-\log (x+1)\right),
\end{align*}
then
\begin{align*}
    f'(x)=\log x-\log (x+1)+(x+1)\left(\frac{1}{x}-\frac{1}{x+1}\right)=\frac{1}{x}-\log \left(1+\frac{1}{x}\right)>0,
\end{align*}
where the last inequality follows from the well-known inequality $y>\log (1+y)$ used with $y=\frac{1}{x}$. Hence, we can bound
\begin{align}\label{L1}
    \left(\frac{x}{x+1}\right)^{x+1}\geq \left(\frac{4200}{4201}\right)^{4201}
\end{align}
for all $x\geq 4200$.

Now we need to find a lower bound for $\left(\frac{\log x}{\log (x+1)}\right)^{x+1}$. Let us write
\begin{align*}
    \left(\frac{\log x}{\log (x+1)}\right)^{x+1}=\left[\left(1-\frac{1}{\frac{\log (x+1)}{\log (x+1)-\log x}}\right)^{\frac{\log (x+1)}{\log (x+1)-\log x}}\right]^{\log\left(1+\frac{1}{x}\right)^{x}\cdot\frac{1}{\log (x+1)}\cdot\left(1+\frac{1}{x}\right)}.
\end{align*}
At first, we prove that functions $g(t):=\left(1-\frac{1}{t}\right)^{t}$ and $h(x):=\frac{\log (x+1)}{\log (x+1)-\log x}$ are increasing. For the function $g(t)$ it is enough to observe, that $\log g(t)=f(t-1)$ and the function $f(x)$ is increasing. For the function $h(x)$ we have:
\begin{align*}
    h'(x)= & \frac{1}{(\log (x+1)-\log x)^{2}}\left[\frac{\log (x+1)-\log x}{x+1}-\log (x+1)\left(\frac{1}{x+1}-\frac{1}{x}\right)\right] \\
    = & \frac{1}{(\log (x+1)-\log x)^{2}}\left[\frac{\log (x+1)}{x}-\frac{\log x}{x+1}\right] >0.
\end{align*}

The fact that the functions $g(t)$ and $h(x)$ are increasing, together with the properties $g(h(4200))\in (0,1)$ and $\log\left(1+\frac{1}{x}\right)^{x}<1$, give us
\begin{equation}\label{L2}
    \begin{aligned}
        \left(\frac{\log x}{\log (x+1)}\right)^{x+1}\geq & \big[g(h(4200))\big]^{\log\left(1+\frac{1}{x}\right)^{x}\cdot\frac{1}{\log (x+1)}\cdot\left(1+\frac{1}{x}\right)} 
        > \big[g(h(4200))\big]^{\frac{1}{\log (x+1)}\cdot\left(1+\frac{1}{x}\right)} \\
        \geq & \big[g(h(4200))\big]^{\frac{1}{\log (4200+1)}\cdot\left(1+\frac{1}{4200}\right)}=\left(\frac{\log 4200}{\log 4201}\right)^{\frac{4201}{4200}\cdot\frac{1}{\log 4201 -\log 4200}}
    \end{aligned}
\end{equation}
for all $x\geq 4200$.

Combining (\ref{L1}) and (\ref{L2}) we get the inequality
\begin{align*}
    L(x)\geq \left(\frac{4200}{4201}\right)^{4201}\left(\frac{\log 4200}{\log 4201}\right)^{\frac{4201}{4200}\cdot\frac{1}{\log 4201 -\log 4200}}\approx 0.3262768>0.32627.
\end{align*}
The proof is finished.
\end{proof}

In the next lemma we provide a lower bound for $p_{n}^{(k)}$.

\begin{lem}\label{lemDOWN}
If $n>e^{4200}$, then for all $k\geq\lfloor\log n\rfloor$ we have
\begin{align*}
    p_{n}^{(k)}>\left(\frac{e\cdot k\log k}{\log\log n}\right)^{k}.
\end{align*}
\end{lem}
\begin{proof}
First, let us observe that a simple induction argument on $k$ implies the inequality
\begin{align}\label{ineqDOWN}
    p_{n}^{(k)}>n(\log n)^{k}.
\end{align}
Indeed, for $k=1$ this follows from left inequality in (\ref{ineqPrimes}). Using the same inequality we get also 
\begin{align*}
p_{n}^{(k+1)}>p_{n}^{(k)}\log p_{n}^{(k)}>n (\log n)^{k}\log(n (\log n)^{k})>n(\log n)^{k+1},
\end{align*}
and hence (\ref{ineqDOWN}).

Now we show that the inequality from the statement is true for $k=\lfloor \log n\rfloor$. Because of (\ref{ineqDOWN}) it is enough to show:
\begin{align*}
    n(\log n)^{k}>\left(\frac{e\cdot k\log k}{\log\log n}\right)^{k},
\end{align*}
or equivalently, after taking logarithms we get
\begin{align*}
    &  \log n+\lfloor\log n\rfloor\log\log n>\lfloor\log n\rfloor+\lfloor\log n\rfloor\log\lfloor\log n\rfloor \\
    & \hspace{6.5cm}  +\lfloor\log n\rfloor\log\log\lfloor\log n\rfloor -\lfloor\log n\rfloor\log\log\log n.
\end{align*}
This is equivalent to the inequality
\begin{align*}
    & \big(\log n-\lfloor\log n\rfloor\big)+\lfloor\log n\rfloor\big(\log\log n-\log\lfloor\log n\rfloor\big) \\
    & \hspace{6.5cm} +\lfloor\log n\rfloor\big(\log\log\log n-\log\log\lfloor\log n\rfloor\big)>0,
\end{align*}
which is obviously true.

In order to finish the proof, we again use the induction argument. The inequality from the statement of our lemma is true for $k=\lfloor \log n\rfloor$. Assume it holds for some $k\geq \lfloor\log n\rfloor$. Then by (\ref{ineqPrimes}) and the induction hypothesis we get
\begin{align*}
    p_{n}^{(k+1)}> & p_{n}^{(k)}\log p_{n}^{(k)}>\left(\frac{e\cdot k\log k}{\log\log n}\right)^{k}\log \left(\frac{e\cdot k\log k}{\log\log n}\right)^{k}.
\end{align*}
It is enough to show that for all $n>e^{4200}$ and all $k\geq\lfloor\log n\rfloor$ we have
\begin{align*}
    \left(\frac{e\cdot k\log k}{\log\log n}\right)^{k}\log \left(\frac{e\cdot k\log k}{\log\log n}\right)^{k}>\left(\frac{e\cdot (k+1)\log (k+1)}{\log\log n}\right)^{k+1}.
\end{align*}
This is equivalent to
\begin{align*}
    k^{k+1}(\log k)^{k}\left[\log k+\log\left(\frac{e\log k}{\log\log n}\right)\right]>\frac{e}{\log\log n}(k+1)^{k+1}(\log(k+1))^{k+1}.
\end{align*}
Recall, that we assume that $k\geq \lfloor\log n\rfloor$. Thus $k^{e}>\log n$, that is, $e\log k>\log\log n$. Therefore, it is enough to show the following inequality:
\begin{align*}
    k^{k+1}(\log k)^{k+1}>\frac{e}{\log\log n}(k+1)^{k+1}(\log(k+1))^{k+1},
\end{align*}
or equivalently
\begin{align*}
    \left(\frac{k}{k+1}\right)^{k+1}\left(\frac{\log k}{\log (k+1)}\right)^{k+1}>\frac{e}{\log\log n}.
\end{align*}
Notice that the left-hand side expression of the last inequality is equal to $L(k)$, where the function $L(x)$ is defined in the statement of Lemma \ref{lemL}. If $n>e^{4200}$, then $k\geq \lfloor\log n\rfloor \geq 4200$, and Lemma \ref{lemL} implies $L(k)>0.32627$. Therefore, if $N:=\max\left\{\lfloor e^{4200}\rfloor,\lceil e^{e^{e/0.32627}}\rceil\right\}=\lfloor e^{4200}\rfloor$, then for all $n>N$ and $k\geq \lfloor\log n\rfloor$ we have:
\begin{align*}
    L(k)>0.32627\geq \frac{e}{\log\log N}>\frac{e}{\log\log n}.
\end{align*}
This finishes the proof.
\end{proof}

We are ready to prove the main result of this section.

\begin{theorem}\label{MAIN}
\begin{enumerate}
    \item Let $n>e^{4200}$. Then
    \begin{align*}
        \log p_{n}^{(k)}=k(\log k+\log\log k+O_{n}(1))
    \end{align*}
    as $k\rightarrow\infty$, where the implied constant may depend on $n$.
    \item We have
    \begin{align*}
        \log p_{k}^{(k)}=k(\log k+\log\log k+O(\log\log\log k))
    \end{align*}
    as $k\rightarrow\infty$.
\end{enumerate}
\end{theorem}
\begin{proof}
If $n>e^{4200}$ and $k\geq n$, Lemmas \ref{lemUP} and \ref{lemDOWN} give us:
\begin{align*}
    \left(\frac{e\cdot k\log k}{\log\log n}\right)^{k}<p_{n}^{(k)}<\left(4\cdot k\log k\right)^{k}.
\end{align*}
After taking logarithms, we simply get the first part of our theorem. In order to get the second part, we need to put $n=k$ and repeat the reasoning.
\end{proof}

Theorem \ref{MAIN} implies that for every $n$ we have $\log p_{n}^{(k)}\sim \log p_{k}^{(k)}$ as $k\to\infty$. On the other hand, $p_{k}^{(k)}$ seems to grow much faster than $p_{n}^{(k)}$ for every $n$. Indeed, we show this in the next result.

\begin{proposition}\label{P4}
Let $n\in\mathbb{N}$ be fixed. Then
\begin{align*}
    \frac{p_{n}^{(k)}}{p_{k}^{(k)}}\longrightarrow 0
\end{align*}
as $k\longrightarrow\infty$.
\end{proposition}
\begin{proof}
Let $k> p_{n}$. Then
\begin{align*}
    0\leq \frac{p_{n}^{(k)}}{p_{k}^{(k)}}<\frac{p_{n}^{(k)}}{p_{p_{n}}^{(k)}}=\frac{p_{n}^{(k)}}{p_{n}^{(k+1)}}.
\end{align*}
The expression on the right goes to zero as $k$ goes to infinity, as was proved in Theorem \ref{C3}.
\end{proof}

In our opinion, it would be interesting to find the rate of growth of consecutive elements of the sets $\mathbb{P}_{n}^{T}$ and $\Diag$. Remember that Theorem \ref{MAIN} gives only asymptotics of logarithms of numbers $p_n^{(k)}$ and $p_k^{(k)}$, where $k\to\infty$.

\begin{problem}
Find the precise asymptotics for the numbers $p_{n}^{(k)}$ and $p_{k}^{(k)}$ as $k\to\infty$.
\end{problem}

\subsection{Asymptotics of counting functions of the sets $P_n^T$ and $\text{Diag}\P$}

The next results are devoted to asymptotics of counting functions of sets $\P^T_n(x)$ and their elements. Our aim is to show that for every $n$ we have 
$$\mathbb{P}_{n}^{T} (x)\sim\Diag (x)\sim\frac{\log x}{\log\log x}.$$
We begin with some preparatory results. At first, we prove that for every $m$ and $n$ we have $\mathbb{P}_{m}^{T}(x)\sim\mathbb{P}_{n}^{T}(x)$.

\begin{theorem}\label{T6}
	For any $m,n\in\N$ there exists a constant $c>0$ (depending on $m$ and $n$) such that
	$$\left|\P_m^\T(x)-\P_n^\T(x)\right|\le c\,.$$
	In particular,
	$$\P_m^\T(x)\sim \P_n^\T(x)\,$$
	for arbitrary $m,n\in\N$.
\end{theorem}
\begin{proof}
	Without loss of generality we assume that $n<m$. We already know that if $m=p^{(j)}_n$ for some $j\in\N$ then from Theorem~\ref{T6} we have $\#\left(\P_n^\T\setminus \P^T_{p^{(j)}_n}\right)=j$ and $\P^T_{p^{(j)}_n}\subset \P^T_n$. Hence 
	$$\P^T_n(x)=\P^T_{p^{(j)}_n}(x)+j\quad\textrm{for }\quad x\geq p^{(j)}_n .$$
	Now, let $n<m$ be arbitrary. Since the sequence $\big(p^{(j)}_n\big)_{j=1}^{+\infty}$ is a strictly increasing sequence of integers, there exists a $j_0\in\N_0$ such that $p^{(j_0)}_n\leq m<p^{(j_0 +1)}_n$. Thus,
	$$p^{(j_0+k)}_n\leq p^{(k)}_m<p^{(j_0+1+k)}_n\quad\textrm{for each }\quad k\in\N .$$
	As a result,
	$$\P^T_{p^{(j_0)}_n}(x)\geq \P^T_m(x)\geq \P^T_{p^{(j_0+1)}_n}(x)\quad\textrm{for }\quad x\geq0 .$$
	This implies
	$$\P^T_n(x)-j_0\geq \P^T_m(x)\geq \P^T_n(x)-j_0-1\quad\textrm{for }\quad x\geq p^{(j_0+1)}_n .$$
	In other words,
	$$\P^T_n(x)-\P^T_m(x)\in\{j_0, j_0+1\} .$$
\end{proof}

We will also need the following two technical lemmas.

\begin{lem}\label{T6.1}
For each $n\in\N$ and $k,j\in\N_0$ with $j\geq k$ the following estimations hold:
\begin{enumerate}
\item[(i)] $p^{(j)}_n\geq p^{(k)}_n\log^{j-k}p^{(k)}_n$,
\item[(ii)] $p^{(j)}_n\geq p_n^{(k)}\prod_{i=0}^{j-k-1}(\log p^{(k)}_n+i\log\log p^{(k)}_n)$.
\end{enumerate}
\end{lem}
\begin{proof}
Using  Proposition~\ref{P3}.{i)} $j-k$ times, we obtain
\begin{equation}\label{e22}
\begin{split}
& p^{(j)}_n\geq p^{(j-1)}_n\log p^{(j-1)}_n\geq  p^{(j-2)}_n\log p^{(j-2)}_n\log p^{(j-1)}_n\geq\dots\\
& \geq  p^{(k)}_n\log p^{(k)}_n\log p^{(k+1)}_n\dots\log p^{(j-1)}_n.
\end{split}
\end{equation}
Estimating $\log p^{(i)}_n$, $i\in\{k,...,j-1\}$, by $\log p^{(k)}_n$ we obtain the first inequality in the statement of the theorem.

Having the first inequality, we use it in (\ref{e22}) by estimating $\log p^{(i)}_n$, $i\in\{k,...,j-1\}$, by $\log p^{(k)}_n+i\log\log p^{(k)}_n$. Then we get the second inequality.
\end{proof}

\begin{remark}
In the case of the numbers $p_{j}^{(j)}$ one can find lower bounds similar to those from Lemma \ref{T6.1}. More precisely, for each $k,j\in\N$ with $j\geq k$ the following estimations hold:
\begin{enumerate}
\item[(i)] $p^{(j)}_j\geq p^{(k)}_k\log^{j-k}p^{(k)}_k$,
\item[(ii)] $p^{(j)}_j\geq p_k^{(k)}\prod_{i=0}^{j-k-1}(\log p^{(k)}_k+i\log\log p^{(k)}_k)$.
\end{enumerate}
\end{remark}

\begin{lem}\label{T7}
	For every $n\in\N$, $k\in\N_0$ and $x\geq p^{(k)}_n$ we have
	$$\P^T_n(x)\leq\frac{1}{\log\log p^{(k)}_n}\ \log x +\left(k-\frac{\log p^{(k)}_n}{\log\log p^{(k)}_n}\right).$$
\end{lem}
\begin{proof}
Let $n\in\N$, $k\in\N_0$ and $x\geq p^{(k)}_n$ be fixed. Then, there exists a $j\in\N$ such that \\$p^{(j)}_n\leq x<p^{(j+1)}_n$. Thus $\P^T_n (x)=j$. 
Therefore,
\begin{equation}\label{e3}
x\geq p^{(j)}_n\geq p^{(k)}_n\log^{j-k} p^{(k)}_n
\end{equation}
by Lemma \ref{T6.1}.i). Then, from \eqref{e3} we have
$$j-k\leq \frac{1}{\log\log p^{(k)}_n}\ \log x -\frac{\log p^{(k)}_n}{\log\log p^{(k)}_n} .$$
Thus,
$$\P^T_n(x)=j\leq \frac{1}{\log\log p^{(k)}_n}\ \log x +\left(k-\frac{\log p^{(k)}_n}{\log\log p^{(k)}_n}\right).$$
\end{proof}

We are ready to prove the following results concerning the asymptotics of $\mathbb{P}_{n}^{T}(x)$ and $\Diag (x)$ as $x\to\infty$. At first, we prove the asymptotic eaulity between counting functions of $\Diag$ and $P_n^T$.

\begin{theorem}\label{AsympEqThm}
For each $n\in\mathbb{N}$ we have
\begin{align*}
    \Diag (x)\sim \mathbb{P}_{n}^{T} (x)
\end{align*}
as $x\rightarrow\infty$.
\end{theorem}
\begin{proof}
From Theorem \ref{T6} we know that
\begin{align*}
    \mathbb{P}_{m}^{T}(x)\sim \mathbb{P}_{n}^{T}(x)
\end{align*}
for each $m,n\in\mathbb{N}$. Therefore, it is enough to prove $\Diag (x)\sim \mathbb{P}_{n}^{T} (x)$ for some sufficiently large $n$.

Let $n=\lfloor e^{4200}\rfloor +100$ and let $x$ be a large real number. Let $k$ be such that $p_{k}^{(k)}\leq x<p_{k+1}^{(k+1)}$. Then $\Diag (x)=k$. By Lemma \ref{T7} and Theorem \ref{MAIN} we have
\begin{align*}
    \frac{\mathbb{P}_{n}^{T}(x)}{\Diag (x)}\leq & 1+\frac{\log p_{k+1}^{(k+1)}}{k\log\log p_{n}^{(k)}}-\frac{\log p_{n}^{(k)}}{k\log\log p_{n}^{(k)}} \\
    = & 1+\frac{(1+o(1))(k+1)\log (k+1)}{k\log \big[(1+o(1))k\log k\big]}-\frac{(1+o(1))k\log k}{k\log\big[(1+o(1))k\log k\big]} \\
    = & 1+(1+o(1))\left(1+\frac{1}{k}\right)\frac{\log (k+1)}{\log k+\log\left[(1+o(1))\log k\right]} \\ 
    & \hspace{5cm} -(1+o(1))\frac{\log k}{\log k+\log\left[(1+o(1))\log k\right]}.
\end{align*}
The whole last expression goes to $1$ as $k$ goes to infinity. On the other hand, $\Diag (x)\leq \mathbb{P}_{n}^{T}(x)$ for $x\geq p_{n}^{(n)}$ and we get the result.
\end{proof}

At last, we compute an asymptotic formula for counting functions of $\Diag$ and $P_n^T$.

\begin{theorem}\label{AsympDiagP}
\begin{enumerate}
    \item Let $n\in\mathbb{N}$. Then
    \begin{align*}
        \mathbb{P}_{n}^{T}(x)\sim\frac{\log x}{\log\log x}.
    \end{align*}
    \item We have
    \begin{align*}
        \Diag (x)\sim \frac{\log x}{\log\log x}.
    \end{align*}
\end{enumerate}
\end{theorem}
\begin{proof}
In view of Theorem \ref{AsympEqThm}, it is enough to show the statement for the function $\Diag (x)$. Let us fix an arbitrarily small number $\varepsilon >0$ and take a sufficiently large real number $x$ and find $k$ such that $p_{k}^{(k)}\leq x<p_{k+1}^{(k+1)}$. Then $\Diag (x)=k$ and by Lemmas \ref{lemUP} and \ref{lemDOWN} we have
\begin{align*}
    k^{k}<x<k^{(1+\varepsilon )k}.
\end{align*}
Let us write $x=e^{y}$. Then 
\begin{align*}
    k\log k< y<(1+\varepsilon )k\log k.
\end{align*}
If $y$ is sufficiently large, this implies 
\begin{align}\label{ineqyk}
    (1-\varepsilon )\frac{y}{\log y}< k<(1+\varepsilon )\frac{y}{\log y}.
\end{align}
Indeed, if $k\leq (1-\varepsilon )\frac{y}{\log y}$, then
\begin{align*}
    y<(1+\varepsilon)k\log k\leq (1-\varepsilon^{2})\frac{y}{\log y}\log\left((1-\varepsilon )\frac{y}{\log y}\right)<(1-\varepsilon^{2})\left(1-\frac{\log\log y}{\log y}\right)y,
\end{align*}
which is impossible. Similarly, if $k\geq (1+\varepsilon )\frac{y}{\log y}$, then
\begin{align*}
    y>k\log k\geq (1+\varepsilon )\frac{y}{\log y}\log\left((1+\varepsilon )\frac{y}{\log y}\right)>(1+\varepsilon )\left(1-\frac{\log\log y}{\log y}\right)y.
\end{align*}
The above inequality cannot hold if $y$ is sufficiently large.

If we go back to $k=\Diag (x)$ and $y=\log x$ in (\ref{ineqyk}), we get
\begin{align*}
    (1-\varepsilon )\frac{\log x}{\log\log x}<\Diag (x)<(1+\varepsilon )\frac{\log x}{\log\log x}.
\end{align*}
The number $\varepsilon >0$ was arbitrary, so the result follows.
\end{proof}

\section*{Acknowledgements}

The authors wish to thank Carlo Sanna for the idea of the proof of Proposition \ref{P4}.

\bigskip\bigskip

\footnotesize{
\textsc{Institute of Mathematics, Faculty of Mathematics and Computer Science, Jagiellonian University in Krak\'ow, ul. {\L}ojasiewicza 6, 30-348 Krak\'ow, Poland,}

\textit{Email address:} \href{mailto:piotr.miska@uj.edu.pl}{\nolinkurl{piotr.miska@uj.edu.pl}}
}

\vspace{0.5cm}

\footnotesize{
\textsc{Department of Mathematics and Informatics, J. Selye University, P. O. Box 54, 945 01 Kom\'arno, Slovakia,}

\textit{Email address:} \href{mailto:tothj@ujs.sk}{\nolinkurl{tothj@ujs.sk}}
}

\vspace{0.5cm}

\footnotesize{
\textsc{Department of Algebra, Faculty of Mathematics and Physics, Charles University, Sokolovsk\'{a} 83, 18600 Praha 8, Czech Republic,}

\vspace{0.15cm}

\textsc{Institute of Mathematics, Faculty of Mathematics and Computer Science, Jagiellonian University in Krak\'ow, ul. {\L}ojasiewicza 6, 30-348 Krak\'ow, Poland,} 

\textit{Email address:} \href{mailto:blazej.zmija@im.uj.edu.pl}{\nolinkurl{blazej.zmija@im.uj.edu.pl}}
}

\end{document}